\newtheorem{theorem}{\bf Theorem}[section]
\newtheorem{lemma}[theorem]{\bf Lemma}
\begin{document}
\title{On the Exponential Diophantine Equation $\left(F_{m+1}^{(k)}\right)^x - \left(F_{m-1}^{(k)}\right)^x = F_{n}^{(k)}$}
\author{Hayat Bensella, Bijan Kumar Patel and Djilali Behloul}
\date{}
\maketitle
\begin{abstract} \noindent
In this paper, we explicitly find all solutions of the title Diophantine equation,  using lower bounds for linear forms in logarithms and properties of continued fractions. Further, we use a version of the Baker-Davenport reduction method in Diophantine approximation, due to Dujella and Peth$\ddot{\textrm{o}}$. This paper extends the previous work of \cite{Patel}.
\end{abstract}
\noindent \textbf{\small{\bf Keywords}}: Generalized Fibonacci numbers, linear forms in logarithms, continued fraction, reduction method. \\
{\bf 2010 Mathematics Subject Classification:} 11B39; 11J86; 11D61.

\section{Introduction}
\noindent
Let $\{F_n\}_{n\geq 0}$ be the Fibonacci sequence given by 
\[
F_{n+2} = F_{n+1} + F_n~~ {\rm for} ~n\geq 0
\]
with initials $F_0 = 0$ and $F_1 = 1$. 

The Fibonacci numbers are celebrated for possessing wonderful and amazing properties \cite{Koshy}. Hundreds of Fibonacci properties have been developed over the centuries by numerous mathematicians and number enthusiasts. Among such fabulous properties, in 1876, E. Lucas \cite{Lucas} showed that for all $n \geq 1$,
\begin{align}
F_n^2+F_{n+1}^2&=F_{2n+1}, \label{naive} \\
F_{n+1}^{2} - F_{n-1}^{2} &= F_{2n}. \label{cons}
\end{align}

Diophantine equations related to sums of powers of two terms of a given linear recurrence sequence, were studied by several authors. For instance, motivated by the naive identity \eqref{naive},
which tells us that the sum of the square of two consecutive Fibonacci numbers is still a Fibonacci number, Marques and Togb\'{e} \cite{Marques} showed that, if $x \geq 1$ is an integer such that $F_{n}^x + F_{n+1}^x$ is a Fibonacci number for all sufficiently large $n$, then $x \in \{1, 2 \}.$ Later, Luca and Oyono \cite{Luca} solved this problem completely by showing that the  Diophantine equation
$
F_m^s+F_{m+1}^s=F_n
$
has no solutions $(m,n,s)$ with $m\geq 2$ and $s \geq 3$. Subsequently, 
Chaves and Marques \cite{Chaves} proved the Diophantine equation
\[
\left(F_{m}^{(k)}\right)^s + \left(F_{m+1}^{(k)}\right)^s = F_{n}^{(k)}
\] 
in $k$-generalized Fibonacci numbers, showing that it has no positive integer solution $(n, m, k)$ with $k \geq 3$ and $n \geq 1$. Then Ruiz and Luca \cite{Ruiz} solved this Diophantine equation completely by showing that it has no solutions.

Patel and Chaves \cite{Patel} studied analogous of \eqref{cons} in higher powers and solved the problem completely by showing that the Diophantine equation
\begin{equation}\label{eq001}
F_{n+1}^{x} - F_{n-1}^{x} = F_{m},
\end{equation} 
has only non-negative integer solutions $(m, n, x) = (2n, n, 2), (1, 1, x), (2, 1, x), (0, n, 0)$. Consequently, G$\acute{\text{o}}$mez et al. \cite{Gomez} considered an extra exponent $y$ in \eqref{eq001} and investigated the equation 
\[
F_{n+1}^{x} - F_{n-1}^{x} = F_{m}^{y}
\]
in positive integers $(n,m,x,y)$. Patel and Teh \cite{Patel1} found all the solutions of the exponential Diophantine equation $F_{n+1}^x + F_{n}^x - F_{n-1}^x = F_m$ in non-negative integers $(m, n, x)$, which tells us that the sum of the power of three consecutive Fibonacci numbers is still a Fibonacci number.

Let $k \geq 2$ be an integer. One of numerous generalizations of the Fibonacci sequence, which is called the $k$-generalized Fibonacci sequence $\{F_{n}^{(k)} \}_{n \geq -(k-2)}$ is given by the recurrence

\begin{equation}\label{eq1}
  F_{n}^{(k)} = F_{n-1}^{(k)} + F_{n-2}^{(k)} + \dots + F_{n-k}^{(k)} = \sum_{i=1}^{k} F_{n-i}^{(k)} ~~\text{for all}~n \geq 2,  
\end{equation}
with initial conditions $F_{-(k-2)}^{(k)} = F_{-(k-3)}^{(k)} = \dots = F_{0}^{(k)} = 0$ and $F_{1}^{(k)} = 1$. Here, $F_{n}^{(k)}$ denotes the $n$th $k$-generalized Fibonacci number.

Note that for $k = 2$, we have $F_{n}^{(2)} = F_n$, the $n$th Fibonacci number. 
For $k = 3$, we have $F_{n}^{(3)} = T_n$, the $n$th Tribonacci numbers. They are followed by the Tetranacci numbers for $k = 4$, and so on.

The first observation is that the first $k + 1$ non-zero terms in $F_{n}^{(k)}$ are powers of $2$, namely
\[
F_{1}^{(k)}=1, F_{2}^{(k)}=1, F_{3}^{(k)}=2, F_{4}^{(k)}=4, \dots, F_{k+1}^{(k)} = 2^{k-1},
\]
while the next term in the above sequence is $F_{k+2}^{(k)} = 2^k -1$. Thus, we have that
\[
F_{n}^{(k)} = 2^{n-2}~\text{holds for all}~2 \leq n \leq k+1.
\]
Indeed, observe that recursion \eqref{eq1} implies the three-term recursion
\[
F_{n}^{(k)} = 2F_{n-1}^{(k)} - F_{n-k-1}^{(k)}~\text{for all}~n \geq 3,
\]
which shows that the $k$-Fibonacci sequence grows at a rate less than $2^{n-2}$.  In fact, the inequality $F_{n}^{(k)} < 2^{n-2}$ holds for all $n \geq k + 2$ (see \cite{Bravo}, Lemma 2).

In this paper, we study an analogue of \eqref{eq001} when the Fibonacci sequence is replaced by $k$-generalized Fibonacci sequence. More precisely, our main result is the following.
\begin{theorem}\label{thm1}
The Diophantine equation 
\begin{equation}\label{eq00}
\left(F_{m+1}^{(k)} \right)^x - \left(F_{m-1}^{(k)} \right)^x = F_{n}^{(k)}    
\end{equation}
has no positive integer solutions $m, n, k$ and $x \geq 2$ with $3 \leq k \leq \min \{m, \log x \}$.
\end{theorem}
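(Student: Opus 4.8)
The plan is to run the dominant-root analysis of $F^{(k)}_n$ together with two applications of Matveev's lower bound for linear forms in logarithms, arranged so that every estimate stays \emph{polynomial} in $k$, and then to contradict the hypothesis $x\ge e^{k}$. I would use the standard Binet-type expansion $F^{(k)}_n=g\,\alpha^{\,n-1}+e_n$ with $|e_n|<1/2$, where $\alpha=\alpha(k)$ is the dominant root of $x^{k}-x^{k-1}-\cdots-1$, $2(1-2^{-k})<\alpha<2$, and $g=g_k(\alpha)$ with $1/2<g<3/4$, together with the standard height bound $h(g)\ll\log k$ and the inequalities $\alpha^{\,j-2}\le F^{(k)}_j<\alpha^{\,j-1}$. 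The first step is the coarse size estimate: from $F^{(k)}_{m+1}>2F^{(k)}_{m-1}$ and $x\ge2$ one has $\tfrac12 (F^{(k)}_{m+1})^{x}<F^{(k)}_n<(F^{(k)}_{m+1})^{x}$, whence $(m-1)x<n<mx+2$ and $B:=\max\{n,x\}\ll mx$. The value $m=k$ makes $F^{(k)}_{m\pm1}$ powers of two; it is covered by what follows (with $F^{(k)}_{m+1}=2^{k-1}\in\mathbb{Q}$) and can also be settled directly.

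Dividing the equation by $(F^{(k)}_{m+1})^{x}$ and inserting the Binet formula produces the first linear form
\[
\Lambda_1:=g\,\alpha^{\,n-1}\,(F^{(k)}_{m+1})^{-x}-1,\qquad |\Lambda_1|<2^{-x+1},
\]
the bound coming from $(F^{(k)}_{m-1}/F^{(k)}_{m+1})^{x}<2^{-x}$ and the negligible $e_n$-term. Applying Matveev to $\Lambda_1$ with the three numbers $g,\alpha,F^{(k)}_{m+1}$ in $\mathbb{Q}(\alpha)$ (degree $D=k$), and using the crucial cancellation $D\,h(\alpha)=\log\alpha$ together with $D\,h(F^{(k)}_{m+1})=k\log F^{(k)}_{m+1}<km\log2$ and $h(g)\ll\log k$, I compare the resulting lower bound with $|\Lambda_1|<2^{-x+1}$ to obtain
\[
x<C_1\,k^{4}\,m\,(\log k)^{2}\,\log(mx)
\]
for an absolute constant $C_1$. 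This controls $x$ but still involves the a priori unbounded $m$.

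To bound $m$ I would manufacture a second, two-term form. Writing $\log F^{(k)}_{m+1}=\log g+m\log\alpha+\log(1+\delta_{m+1})$ with $|\delta_{m+1}|<\alpha^{-m}$ and substituting into the logarithm of $\Lambda_1$ gives
\[
\Lambda_2:=\alpha^{\,n-1-mx}\,g^{\,1-x}-1,\qquad \log|\Lambda_2|\approx\log x-m\log\alpha,
\]
the error being governed by $x\,\delta_{m+1}$ (the $2^{-x}$ and $e_n$ contributions are smaller). Matveev applied to $\Lambda_2$ yields $\log|\Lambda_2|>-C_2\,k^{3}(\log k)^{2}\log x$, hence $m<C_3\,k^{3}(\log k)^{2}\log x$; in the sub-case where instead the $2^{-x}$ term dominates the error one gets $x<C_4\,k^{3}(\log k)^{2}\log x$ outright, and if $m$ is so small that $x\,\delta_{m+1}$ is not itself small then already $m\ll\log x$. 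Feeding any of these back into the first bound collapses everything to $x<C_5\,k^{7}(\log k)^{6}$. Since the hypothesis $k\le\log x$ reads $x\ge e^{k}$, this forces $e^{k}<C_5\,k^{7}(\log k)^{6}$, which fails for every $k\ge k_0$ with an explicit $k_0$; thus only finitely many $k$ survive, each with $x,m,n$ bounded. I expect the genuine difficulty to be exactly this bookkeeping: confirming that after two passes through Matveev the surviving bound on $x$ is truly polynomial in $k$ (so that the double exponential $e^{k}$ wins), and checking $\Lambda_1,\Lambda_2\neq0$ via a Galois-conjugation argument so that Matveev applies.

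For the finitely many remaining $k$ (each with $e^{k}\le x<X_0$ and correspondingly bounded $m,n$) I would finish with the Baker--Davenport reduction. For fixed $k$ and fixed admissible $m$, the estimate $|\Lambda_1|<2^{-x+1}$ rewrites as $|\,x\,\theta_m-(n-1)-\mu\,|<2^{-x+2}/\log\alpha$ with $\theta_m=(\log F^{(k)}_{m+1})/\log\alpha$ and $\mu=(\log g)/\log\alpha$; feeding the continued-fraction convergents of the irrational $\theta_m$ into the Dujella--Peth\H{o} lemma reduces the bound on $x$ to a small explicit number, after which the finitely many surviving tuples $(k,m,n,x)$ are eliminated by direct verification. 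This purely computational step closes the argument.
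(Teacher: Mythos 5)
Your first two stages track the paper's actual proof closely and are sound in outline: the three-log Matveev bound on $\Lambda_1$ giving $x\ll k^4 m(\log k)^2\log(mx)$, the two-term form $\Lambda_2=g^{1-x}\alpha^{\,n-1-mx}-1$ with error $\ll x\alpha^{-m}+\alpha^{-2x}$, and the use of the hypothesis $k\le\log x$ (i.e.\ $x\ge e^{k}$) against a bound for $x$ that is polynomial in $k$ (the paper applies the Laurent--Mignotte--Nesterenko two-log bound to $\Lambda_2$ instead of Matveev, but that affects only constants). The genuine gap is in your endgame. Your three sub-cases do not all bound $m$: in the sub-case where the $\alpha^{-2x}$ term (your $2^{-x}$) dominates the error of $\Lambda_2$, i.e.\ $m\gtrsim 2x$, you obtain $x\ll k^{3}(\log k)^{3}$ but no upper bound on $m$ whatsoever, so your claim that the surviving $k$ come ``each with $x,m,n$ bounded'' is unjustified. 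This wrecks the proposed finish: the Dujella--Peth\H{o} reduction you describe is run on $\Lambda_1$, whose slope $\theta_m=(\log F^{(k)}_{m+1})/\log\alpha$ depends on $m$, so it must be performed once per pair $(k,m)$; with $m$ unbounded that is infinitely many reductions, and the concluding ``direct verification of the finitely many surviving tuples'' is an infinite computation. This sub-case cannot be waved away: for $k=2$ it is exactly where the genuine solutions $F_{m+1}^{2}-F_{m-1}^{2}=F_{2m}$ live, so eliminating it for $k\ge3$ requires an argument uniform in $m$.

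The missing idea --- and the way the paper closes this case --- is to run the reduction on $\Lambda_2$ rather than $\Lambda_1$. Two facts make that uniform in $m$: first, the relevant irrational $\beta_k=\log(g^{-1})/\log\alpha$ depends only on $k$; second, the exponent of $\alpha$ in $\Lambda_2$ satisfies $|n-(mx+1)|\le 2(x-1)$, so it is controlled by $x$ alone, never by $m$ or $n$. From $|\Lambda_2|<\alpha^{-2x}+2.11\,\alpha^{-(m-2)/2}$ one extracts a rational approximation $(mx+1-n)/(x-1)$ to $\beta_k$ which, by Legendre's criterion, would have to be a convergent of $\beta_k$ with a partial quotient larger than the computed ones unless $x$ is small; this bounds $x\le 150$ and $k\le 5$ with no reference to $m$. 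Then, since $t:=mx+1-n$ ranges over a finite set once $x$ is bounded, the finitely many numbers $g^{1-x}\alpha^{-t}-1$ are nonzero (here your ``Galois-conjugation argument'' must be made precise: the paper proves $g$ and $\alpha$ are multiplicatively independent via a norm computation showing $g$ is not a unit) and hence bounded below by some explicit $\epsilon_0>0$; the inequality $2.11/\alpha^{(m-2)/2}>\epsilon_0$ then forces $m\le 33$, contradicting the largeness of $m$ in this case. Without this uniform-in-$m$ step, or an equivalent substitute, your proof does not close.
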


Before getting into details, we give a brief description of our method. We first use Matveev's result \cite{Matveev} on linear forms in logarithms to obtain an upper bound for $x$ in terms of $m$. When $m$ is small, we use Dujella and Peth\"{o}'s result \cite{Dujella} to decrease the range of possible values that allow us to treat our problem computationally. When $m$ is large, we use a lower bound for linear forms in two logarithms \cite{Laurent} to get an absolute upper bound for $x$. In the final step, we use continued fractions to lower the bounds and then complete the calculations. 

\section{Auxiliary results}
\noindent
We recall some of the facts and properties of the $k$-generalized Fibonacci sequence which will be used after. Note that the characteristic polynomial of the $k$-generalized Fibonacci sequence is
\[
\Psi_{k}(x) = x^k - x^{k-1} - \dots - x - 1.
\]
$\Psi_{k}(x)$ is irreducible over $\mathbb{Q}[x]$ and has just one root outside the unit circle. It is real and positive, so it satisfies $\alpha(k) > 1$. The other roots are strictly inside the unit circle. Throughout this paper, $\alpha := \alpha(k)$ denotes that single root, which is located between $2(1-2^{-k})$ and $2$ (see \cite{Miyazaki}). To simplify notation, we will omit the dependence on $k$ of $\alpha$.  

Dresden \cite{Dresden} gave a simplified Binet-like formula
for $F_{n}^{(k)}$:
\begin{equation}\label{eq0}
    F_{n}^{(k)} = \sum_{i=1}^{k} \frac{\alpha_{i}-1}{2+(k+1)(\alpha_{i}-2)} \alpha_{i}^{n-1},
\end{equation}
where $\alpha = \alpha_{1}, \dots, \alpha_{k}$ are the roots of $\Psi_{k}(x)$. He also showed that the contribution of the roots which are inside the unit circle to the right-hand side of \eqref{eq0} is very small. More precisely, he proved that
\[
\Big| F_{n}^{(k)} - \frac{\alpha -1 }{2+(k+1)(\alpha - 2)} \alpha^{n-1} \Big| < \frac{1}{2}~~{\rm for~ all}~~n \geq 1.
\]
The following inequality is proved by Bravo and Luca \cite[Lemma~1]{Bravo}.
\begin{lemma}\label{lm1}
The inequality 
\[
\alpha^{n-2} \leq F_{n}^{(k)} \leq \alpha^{n-1}
\]
holds for all $n \geq 1$.
\end{lemma}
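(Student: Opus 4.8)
The plan is to prove both inequalities by strong induction on $n$, driven by a single algebraic fact about the dominant root. Since $\alpha$ is a root of $\Psi_{k}(x)=x^{k}-x^{k-1}-\cdots-x-1$, it satisfies $\alpha^{k}=\alpha^{k-1}+\cdots+\alpha+1$, and dividing through by $\alpha^{k}$ yields
\[
\sum_{i=1}^{k}\alpha^{-i}=1,
\qquad\text{equivalently}\qquad
\sum_{i=1}^{k}\alpha^{\,n-1-i}=\alpha^{\,n-1}\ \text{ for every integer } n .
\]
This is the exact root-level shadow of the recursion \eqref{eq1}, and it is what makes both telescoping estimates collapse to the desired single power of $\alpha$.

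For the upper bound $F_{n}^{(k)}\leq\alpha^{\,n-1}$ I would run strong induction over all integers, adopting the convention (forced by the initial conditions) that $F_{n}^{(k)}=0$ for $n\leq 0$. At every non-positive index the bound then holds trivially, since its right-hand side is positive; the only genuine base case is $n=1$, where $F_{1}^{(k)}=1=\alpha^{0}$. For $n\geq 2$, the recursion \eqref{eq1} together with the induction hypothesis gives
\[
F_{n}^{(k)}=\sum_{i=1}^{k}F_{n-i}^{(k)}\leq\sum_{i=1}^{k}\alpha^{\,n-i-1}=\alpha^{\,n-1},
\]
the last equality being the characteristic identity above. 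Because the spurious zero terms at non-positive indices do no harm to an upper bound, this half needs essentially no case analysis.

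The lower bound $F_{n}^{(k)}\geq\alpha^{\,n-2}$ demands more care, since padding with zeros now fails: a zero term would violate a strictly positive lower bound, so the recursion must never be allowed to reach a non-positive index. I would therefore take $1\leq n\leq k$ as base cases and verify them from the explicit values recorded in the introduction, namely $F_{1}^{(k)}=1\geq\alpha^{-1}$ (as $\alpha>1$) and $F_{n}^{(k)}=2^{\,n-2}\geq\alpha^{\,n-2}$ for $2\leq n\leq k$ (as $\alpha<2$). Then, for $n\geq k+1$, each index $n-1,\dots,n-k$ in \eqref{eq1} is at least $1$, so the hypothesis applies termwise and
\[
F_{n}^{(k)}=\sum_{i=1}^{k}F_{n-i}^{(k)}\geq\sum_{i=1}^{k}\alpha^{\,n-i-2}=\alpha^{\,n-2},
\]
again by the identity. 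Combining the two halves yields $\alpha^{\,n-2}\leq F_{n}^{(k)}\leq\alpha^{\,n-1}$ for all $n\geq 1$.

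I expect the only delicate point to be correctly delimiting where the naive induction is legitimate. For the upper bound the zero convention lets a single base case suffice, whereas for the lower bound one must prevent the recursion from dipping below index $1$, which is exactly what forces the base range $1\leq n\leq k$ together with the estimate $\alpha<2$ to clear the powers-of-two values. Beyond that bookkeeping, everything reduces to the one-line relation $\sum_{i=1}^{k}\alpha^{-i}=1$.
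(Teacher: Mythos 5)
Your proof is correct: both inductions are properly delimited, the padding convention $F_{n}^{(k)}=0$ for $n\leq 0$ is consistent with the stated initial conditions (indices below $-(k-2)$ never arise in \eqref{eq1}), and the identity $\sum_{i=1}^{k}\alpha^{-i}=1$ does exactly the work you claim, with the base range $1\leq n\leq k$ plus $\alpha<2$ handling the lower bound. Note that the paper itself gives no proof of this lemma---it is quoted from Bravo and Luca \cite{Bravo}---and your strong-induction argument via the characteristic relation $\alpha^{k}=\alpha^{k-1}+\cdots+\alpha+1$ is essentially the standard proof from that reference, so you have reproduced the cited argument rather than taken a different route.
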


The following result is derived in \cite[Eq. 3.14]{Chaves}.
\begin{lemma}\label{lm4}
Let $y_{m} := \frac{|E_{m}(k)|s}{g \alpha^{m-1}}$ for $m \geq 1395$. Then
\[
|(F_{m}^{(k)})^s - g^s \alpha^{(m-1)s}| < 2 y_{m} g^s \alpha^{(m-1)s}.
\]
\end{lemma}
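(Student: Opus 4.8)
The plan is to turn the statement into an elementary estimate by using Dresden's decomposition \eqref{eq0}. Write $g := \frac{\alpha-1}{2+(k+1)(\alpha-2)}$ and $E_m(k) := F_m^{(k)} - g\,\alpha^{m-1}$, so that the bound quoted just after \eqref{eq0} gives $|E_m(k)| < \tfrac12$. Setting $A := g\,\alpha^{m-1}$ we have $g^{s}\alpha^{(m-1)s} = A^{s}$ and $F_m^{(k)} = A + E_m(k)$, so the quantity to be controlled is exactly $\bigl|(A+E_m(k))^{s} - A^{s}\bigr|$, while $y_m = s\,|E_m(k)|/A$. Introducing $u := |E_m(k)|/A$, note that $y_m = su$; this single parameter will govern everything.

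Next I would expand by the binomial theorem and apply the triangle inequality,
\[
\bigl|(A+E_m(k))^{s} - A^{s}\bigr| \le \sum_{j=1}^{s}\binom{s}{j}A^{s-j}\,|E_m(k)|^{j}.
\]
Factoring out the $j=1$ term, whose absolute value is $s\,A^{s-1}|E_m(k)| = y_m A^{s}$, and using the identity $\frac1s\binom{s}{j} = \frac1j\binom{s-1}{j-1}$ together with $\frac1j \le 1$, the remaining sum is dominated by $\sum_{i=0}^{s-1}\binom{s-1}{i}u^{i} = (1+u)^{s-1}$. This collapses the whole estimate to
\[
\bigl|(F_m^{(k)})^{s} - g^{s}\alpha^{(m-1)s}\bigr| \le y_m\,g^{s}\alpha^{(m-1)s}\,(1+u)^{s-1},
\]
so the lemma is reduced to the one clean inequality $(1+u)^{s-1} < 2$.

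The genuinely delicate point, and the main obstacle, is verifying $(1+u)^{s-1} < 2$: here $u = |E_m(k)|/(g\alpha^{m-1})$ is minuscule, but the exponent $s$ could a priori be large, so the product $(s-1)u$ must be shown to stay small. Using $1+u < e^{u}$ it suffices to check $(s-1)u < \log 2$, i.e. $(s-1)|E_m(k)| < (\log 2)\,g\,\alpha^{m-1}$. This is precisely where the hypothesis $m \ge 1395$ is decisive: for $k \ge 3$ one has $\alpha > 2(1-2^{-k}) \ge \tfrac74$ and $g$ bounded below by an absolute positive constant, so $A = g\,\alpha^{m-1}$ exceeds $\bigl(\tfrac74\bigr)^{1394}$, which dwarfs the range of $s$ in which the estimate is ever invoked; combined with $|E_m(k)| < \tfrac12$ this forces $(s-1)u$ far below $\log 2$. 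Hence $(1+u)^{s-1} < 2$, and substituting this back yields $\bigl|(F_m^{(k)})^{s} - g^{s}\alpha^{(m-1)s}\bigr| < 2\,y_m\,g^{s}\alpha^{(m-1)s}$, as claimed. I expect the binomial reduction to be routine and the calibration of the threshold $m \ge 1395$ against $\alpha^{m-1}$ to be the only part requiring care.
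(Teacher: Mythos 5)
Your argument is correct in substance, but note first that the paper does not actually prove this lemma: it imports it wholesale from \cite[Eq.~3.14]{Chaves}, so the comparison is with that derivation rather than with anything internal. There, one writes $(F_{m}^{(k)})^s = g^s\alpha^{(m-1)s}(1+\delta_m)^s$ with $\delta_m := E_m(k)/(g\alpha^{m-1})$ and uses $|(1+\delta_m)^s - 1| \le e^{s|\delta_m|}-1 < 2s|\delta_m| = 2y_m$, which is valid once $y_m < \log 2$ (since $e^t - 1 < 2t$ on $(0,\log 2]$). Your binomial expansion, using $\binom{s}{j} = \frac{s}{j}\binom{s-1}{j-1}$ to pull out the factor $y_m$ and dominate the tail by $(1+u)^{s-1}$, is a different elementary packaging of the same estimate and reduces to the identical smallness condition $(s-1)u < \log 2$; the two routes buy the same thing and your reduction steps are all valid.

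The one point you must make explicit rather than gesture at: as stated, the lemma carries no hypothesis on $s$, and without one it is not provable --- indeed, when $E_m(k) > 0$ the left side is $g^s\alpha^{(m-1)s}\bigl((1+u)^s - 1\bigr)$, which grows exponentially in $s$ while $2y_m g^s\alpha^{(m-1)s}$ grows only linearly in $s$ relative to it, so no threshold on $m$ alone can rescue the statement for unrestricted $s$. Your phrase about $A$ dwarfing ``the range of $s$ in which the estimate is ever invoked'' is therefore not a cosmetic shortcut but a silently imported hypothesis, and it should be stated: in the present paper the lemma is applied with $s = x$ subject to the bound \eqref{eq8}, and together with $m \ge 1458$ this yields $y_m < \alpha^{-(m-1)/2}$, which is exactly inequality \eqref{eq11}, where the authors verify the needed smallness at the point of use; in \cite{Chaves} the analogous bound on $s$ in terms of $m$ is already in force when Eq.~3.14 is derived, which is why $m \ge 1395$ suffices there. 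With a hypothesis such as $y_m < \log 2$ (or $s \le \alpha^{(m-1)/2}$) added to the statement, your proof is complete; the minor slack in your claim that $A$ exceeds $(7/4)^{1394}$ (you only get $g\alpha^{m-1} > \tfrac12 (7/4)^{1394}$ from $g > 1/2$) is immaterial, as the margin is astronomical.
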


The following result is proved by S$\acute{\text{a}}$nchez and Luca \cite[Lemma 7]{Sanchez}. 
\begin{lemma} \label{lm5} 
If $ r \ge 1 , T> (4r^{2})^{r}$, and  $T> a/(\log a)^{r}$. Then  
 \[ 
 a <2^{r}T(\log T)^{r}.
 \]
\end{lemma}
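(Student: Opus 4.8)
The plan is to argue by contradiction, using the hypothesis in its equivalent multiplicative form $a < T(\log a)^r$ (valid once $a>1$; if $a\le 1$ the conclusion is immediate, since $T>(4r^2)^r\ge 4$ already forces $2^rT(\log T)^r>1\ge a$). So suppose, contrary to the claim, that $a \ge 2^r T(\log T)^r$. Substituting this lower bound into $a < T(\log a)^r$ yields $2^r(\log T)^r < (\log a)^r$, hence $\log a > 2\log T$; in other words, the contradiction hypothesis already forces $a > T^2$, i.e.\ $a$ is much larger than $T$. This is the first step, and it converts the two multiplicative hypotheses into a usable gap between $\log a$ and $\log T$.

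The second step is to take logarithms of $a < T(\log a)^r$ to get $\log a < \log T + r\log\log a$. Comparing this with $\log a > 2\log T$ from the first step, the $\log T$ term can be absorbed: since $2\log T < \log a < \log T + r\log\log a$, we first obtain $\log T < r\log\log a$, and then, feeding this back in, $\log a < r\log\log a + r\log\log a = 2r\log\log a$. Writing $u:=\log a$, this reads $u/\log u < 2r$, which is the decisive upper bound on $u$ in terms of $r$.

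The third step brings in the remaining hypothesis $T > (4r^2)^r$, i.e.\ $\log T > r\log(4r^2)$. Combined with $\log T < r\log\log a$ from the second step, this gives $\log(4r^2) < \log\log a$, hence $u = \log a > 4r^2$. Since $4r^2>e$ for $r\ge 1$ and the map $x\mapsto x/\log x$ is increasing for $x>e$, monotonicity then forces $u/\log u > (4r^2)/\log(4r^2)$. The contradiction is sealed by the scalar inequality $(4r^2)/\log(4r^2)\ge 2r$, which rearranges (dividing by $2r$) to $2r\ge\log(4r^2)=2\log(2r)$, i.e.\ $r\ge\log(2r)$; this holds for every $r\ge 1$ (it is true at $r=1$, and $r-\log(2r)$ has nonnegative derivative $1-1/r$ there). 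Thus $u/\log u > 2r$, contradicting $u/\log u < 2r$, so the assumption $a\ge 2^rT(\log T)^r$ is untenable and the lemma follows.

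I expect the main obstacle to be bookkeeping rather than depth: one must chain the three relations $a < T(\log a)^r$, $\log a > 2\log T$, and $\log T > r\log(4r^2)$ in exactly the right order, so that each logarithmic term is dominated at the precise moment it is discarded, and one must use the threshold $(4r^2)^r$ sharply enough that the final comparison $(4r^2)/\log(4r^2)\ge 2r$ closes with no slack to spare. The only genuinely analytic ingredient is the monotonicity of $x/\log x$ together with the one-variable inequality $r\ge\log(2r)$; everything else is algebraic rearrangement of the logarithmic estimates.
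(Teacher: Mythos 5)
Your proof is correct, but note that the paper itself offers no proof to compare against: it quotes this lemma verbatim from S\'anchez and Luca \cite[Lemma 7]{Sanchez}, so your argument is necessarily an independent route. I checked your chain and it closes: the contradiction hypothesis $a \ge 2^rT(\log T)^r$ together with $a < T(\log a)^r$ does give $2^r(\log T)^r < (\log a)^r$ and hence $\log a > 2\log T$ (both sides positive since $T > (4r^2)^r \ge 4$, and this also disposes of the small-$a$ regime $1 < a \le e^2$ automatically, so your trivial case $a\le 1$ plus this suffice); then $\log T < r\log\log a$ and $\log a < 2r\log\log a$ follow as you say, the threshold hypothesis forces $u = \log a > 4r^2 > e$, and the monotonicity of $x/\log x$ together with $r \ge \log(2r)$ for $r\ge 1$ (true at $r=1$, derivative $1 - 1/r \ge 0$) seals the contradiction. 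For comparison, the proof in the original source runs differently: it exploits the monotonicity of the stronger function $x \mapsto x/(\log x)^r$ for $x > e^r$, plugging the assumed lower bound $a \ge 2^rT(\log T)^r$ directly into $T > a/(\log a)^r$ and then showing $\log T > r\log 2 + r\log\log T$ holds for $T > (4r^2)^r$. Your version trades that for the weaker and more standard fact that $x/\log x$ increases for $x > e$, at the cost of a slightly longer bookkeeping chain; both are elementary, and yours is self-contained and valid. One cosmetic remark: your parenthetical that the final comparison closes ``with no slack to spare'' overstates the tightness --- at $r=1$ you have $4/\log 4 \approx 2.89 > 2$, so the threshold $(4r^2)^r$ is not used sharply in your argument, which is harmless.
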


The following lemma is useful in the subsequent result.
\begin{lemma}\label{lm2}
For all $m \geq 3$ and $k \geq 3$, we have
\[
F_{m-1}^{(k)}/F_{m+1}^{(k)} \leq 3/7.
\]
\end{lemma}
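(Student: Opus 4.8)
The plan is to avoid estimating $F_{m-1}^{(k)}$ and $F_{m+1}^{(k)}$ separately through Lemma \ref{lm1}, since that only yields $F_{m-1}^{(k)}/F_{m+1}^{(k)} \le \alpha^{m-2}/\alpha^{m-1} = 1/\alpha < 4/7$ (using $\alpha > 2(1-2^{-k}) \ge 7/4$ for $k \ge 3$), which is too weak for the claimed bound $3/7$. Instead I would extract the leading constant exactly by iterating the three-term recursion $F_n^{(k)} = 2F_{n-1}^{(k)} - F_{n-k-1}^{(k)}$. Applying it to $n=m+1$ and then substituting $F_m^{(k)} = 2F_{m-1}^{(k)} - F_{m-k-1}^{(k)}$ gives, for all $m \ge 3$,
\[
F_{m+1}^{(k)} = 4F_{m-1}^{(k)} - 2F_{m-k-1}^{(k)} - F_{m-k}^{(k)},
\]
with the usual convention $F_j^{(k)} = 0$ for $-(k-2) \le j \le 0$. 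Dividing by $F_{m-1}^{(k)}$, the desired $F_{m-1}^{(k)}/F_{m+1}^{(k)} \le 3/7$ becomes $F_{m+1}^{(k)}/F_{m-1}^{(k)} \ge 7/3$, i.e. the correction estimate
\[
S := \frac{2F_{m-k-1}^{(k)} + F_{m-k}^{(k)}}{F_{m-1}^{(k)}} \le \frac{5}{3}.
\]

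Next I would bound $S$ using Lemma \ref{lm1}. The observation that makes this uniform in $m$ is that the upper bound $F_j^{(k)} \le \alpha^{j-1}$ persists for \emph{every} index occurring here: for $j \ge 1$ it is Lemma \ref{lm1}, while for $-(k-2) \le j \le 0$ the left side is $0$ and the right side is positive, so it holds trivially. Hence $F_{m-k}^{(k)} \le \alpha^{m-k-1}$ and $F_{m-k-1}^{(k)} \le \alpha^{m-k-2}$ with no case split, whereas $F_{m-1}^{(k)} \ge \alpha^{m-3}$ by the lower bound in Lemma \ref{lm1} (valid since $m \ge 3$). Combining these,
\[
S \le \frac{2\alpha^{m-k-2} + \alpha^{m-k-1}}{\alpha^{m-3}} = \alpha^{1-k}(2 + \alpha).
\]

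It then remains to verify $\alpha^{1-k}(2+\alpha) < 5/3$ for all $k \ge 3$. Using $7/4 < \alpha < 2$ — so that $\alpha^{1-k} < (7/4)^{1-k}$ because $1-k<0$, and $2+\alpha<4$ — the quantity is at most $4\,(7/4)^{1-k}$, which is decreasing in $k$ and hence at most its value $64/49$ at $k=3$; since $64/49 < 5/3$, the estimate follows with room to spare. I expect the only real subtlety, rather than a genuine obstacle, to be the index bookkeeping in the first two steps: one must confirm that the iterated recursion is valid down to $m=3$ and that the degenerate indices $m-k,\,m-k-1 \le 0$ (which occur precisely in the initial ``powers of two'' regime $m \le k$) are correctly absorbed by the zero convention. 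Once that is in place, this single estimate simultaneously covers the small-$m$ range — where $S=0$ and the ratio is exactly $4$ — and the asymptotic range, where $F_{m+1}^{(k)}/F_{m-1}^{(k)} \to \alpha^2 > 3$.
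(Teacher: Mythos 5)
Your proof is correct, but it takes a genuinely different route from the paper's. The paper argues entirely inside the defining recursion \eqref{eq1}: it expands $F_{m+1}^{(k)}$, $F_{m-1}^{(k)}$ and $F_{m}^{(k)}$ as sums of the $k$ preceding terms and telescopes, reducing the target inequality $7F_{m-1}^{(k)} \le 3F_{m+1}^{(k)}$ step by step to the nonnegativity of a sum of sequence terms; no property of the dominant root $\alpha$ is invoked anywhere. You instead extract the exact identity $F_{m+1}^{(k)} = 4F_{m-1}^{(k)} - 2F_{m-k-1}^{(k)} - F_{m-k}^{(k)}$ from the three-term recursion (your index bookkeeping is sound: both applications of the recursion need only $m \ge 3$, and every index stays $\ge -(k-2)$, so the zero initial values absorb the degenerate cases $m \le k$), and then estimate the correction term $S$ analytically via Lemma \ref{lm1} together with the localization $7/4 < \alpha < 2$. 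Each approach buys something: the paper's is self-contained and purely combinatorial, usable even if one had no control on $\alpha$; yours is quicker to verify, makes the structural reason for the bound transparent (the ratio $F_{m+1}^{(k)}/F_{m-1}^{(k)}$ equals $4 - S$ with $S \le \alpha^{1-k}(2+\alpha)$ exponentially small in $k$), and in fact proves the stronger estimate $F_{m-1}^{(k)}/F_{m+1}^{(k)} \le 49/132 < 0.372$, with equality to $1/4$ in the range $3 \le m \le k$. One cosmetic remark: since your bound on $S$ is strict, you obtain strict inequality in the lemma, which of course implies the stated $\le 3/7$.
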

\begin{proof}
Indeed, we have
$7 F_{m-1}^{(k)} \leq 3 F_{m+1}^{(k)} = 3 \left( F_{m}^{(k)} + F_{n-1}^{(k)} + \dots + F_{m-(k-1)}^{(k)} \right)$. Further simplification, we obtain 
\[
4 F_{m-1}^{(k)} = 4 \left(F_{m-2}^{(k)} + F_{m-3}^{(k)} + \dots + F_{m-(k+1)}^{(k)} \right) \leq 3 F_{m}^{(k)} + 3 \left(F_{m-2}^{(k)} + F_{m-3}^{(k)} + \dots + F_{m-(k-1)}^{(k)} \right)
\] 
and then $F_{m-1}^{(k)} + 3 F_{m-k}^{(k)} + 3 F_{m-(k+1)}^{(k)} \leq 3 F_{m}^{(k)}.$ Furthermore,
\[
0 \leq 2F_{m-1}^{(k)} + 3 \left(F_{m-2}^{(k)} + F_{m-3}^{(k)} + \dots + F_{m-(k+2)}^{(k)} \right)
\]
for $m \geq 3$, which follows the result.
\end{proof}

In order to prove our main result, we use a few times a Baker-type lower bound for a non-zero linear forms in logarithms of algebraic numbers. We state a result of Matveev \cite{Matveev} about the general lower bound for linear forms in logarithms, but first, recall some basic notations from algebraic number theory.

Let $\eta$ be an algebraic number of degree $d$ with minimal primitive polynomial 
\[
f(X):= a_0 X^d+a_1 X^{d-1}+ \cdots +a_d = a_0 \prod_{i=1}^{d}(X- \eta^{(i)}) \in \mathbb{Z}[X],
\]
where the $a_i$'s are relatively prime integers, $a_0 >0$, and the $\eta^{(i)}$'s are conjugates of $\eta$. Then
\begin{equation}\label{eq03}
h(\eta)=\frac{1}{d}\left(\log a_0+\sum_{i=1}^{d}\log\left(\max\{|\eta^{(i)}|,1\}\right)\right)
\end{equation}
is called the \emph{logarithmic height} of $\eta$.

With the established notations, Matveev (see  \cite{Matveev} or  \cite[Theorem~9.4]{Bugeaud1}), proved the ensuing result.

\begin{theorem}\label{th2}
Assume that $\gamma_1, \ldots, \gamma_t$ are positive real algebraic numbers in a real algebraic number field $\mathbb{K}$ of degree $D$, $b_1, \ldots, b_t$ are rational integers, and 
\[
\Lambda :=\gamma_1^{b_1}\cdots\gamma_t^{b_t}-1,
\]
is not zero. Then
\[
|\Lambda| \geq \exp\left(-1.4\cdot 30^{t+3}\cdot t^{4.5}\cdot D^2(1+\log D)(1+\log B)A_1\cdots A_t\right),
\]
where
\[
B\geq \max\{|b_1|,\ldots,|b_t|\},
\]
and
\[
A_i\geq \max\{Dh(\gamma_i),|\log \gamma_i|, 0.16\}, ~ \text{for all} ~ i=1,\ldots,t.
\]
\end{theorem}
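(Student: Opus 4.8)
The plan is to argue by contradiction along the lines of the Gelfond--Baker method, assuming that $|\Lambda|$ is smaller than the asserted bound and extracting a contradiction from the tension between an \emph{analytic} upper bound and an \emph{arithmetic} lower bound attached to a carefully built auxiliary quantity. Writing $\Lambda = \gamma_1^{b_1}\cdots\gamma_t^{b_t}-1$, smallness of $|\Lambda|$ is equivalent to the linear form $L := b_1\log\gamma_1 + \cdots + b_t\log\gamma_t$ being extremely close to $0$ (in the real field $\mathbb{K}$ here), and the whole game is to show that such a near-vanishing cannot occur unless the integers $b_i$ are large — which is exactly why the bound involves the $b_i$ only through $\log B$.

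First I would fix integer parameters $L_1,\ldots,L_t$ (degrees in the auxiliary variables) and a range $S$ of evaluation points, all calibrated against $D$, the heights $A_1,\ldots,A_t$, and $\log B$. The precise sizes of these parameters are what eventually manufacture the explicit factor $1.4\cdot 30^{t+3}\cdot t^{4.5}\cdot D^2(1+\log D)$; in particular, forcing the dependence on the number of logarithms down to the single-exponential $30^{t+3}$ rather than something worse is the technical heart of the matter and is precisely Matveev's contribution \cite{Matveev} over the earlier Baker--W\"ustholz bounds.

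Next comes the auxiliary construction, for which I would use the interpolation-determinant form of the method (due to Laurent \cite{Laurent}). One forms a large square matrix whose entries are derivatives, up to a prescribed order, of the entire functions $z \mapsto \gamma_1^{\lambda_1 z}\cdots\gamma_t^{\lambda_t z} = \exp\!\big(z(\lambda_1\log\gamma_1+\cdots+\lambda_t\log\gamma_t)\big)$, evaluated at integer points $z=s$, as $(\lambda_1,\ldots,\lambda_t)$ and the pair $(s,\text{order})$ each range over boxes of comparable size; let $\Delta$ be its determinant. Two competing estimates then drive the argument. Arithmetically, $\Delta$ is, up to an explicit denominator, an algebraic number of $\mathbb{K}$ of controlled height, so \emph{if} $\Delta \neq 0$ a Liouville-type size inequality gives an explicit lower bound for $|\Delta|$. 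Analytically, one substitutes $L \approx 0$ (this is where $|\Lambda|$ small enters) and applies the Schwarz lemma and the maximum-modulus principle to $\Delta$ viewed as an entire function of the evaluation variable; because of the many interpolation conditions, $\Delta$ vanishes to high order, yielding an upper bound for $|\Delta|$ that is forced to be very small.

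Finally, choosing the parameters so that the analytic upper bound falls strictly below the Liouville lower bound compels $\Delta = 0$. A zero (multiplicity/rank) estimate for the relevant commutative group variety — in the tradition of Philippon and Waldschmidt, or in Matveev's self-contained version a direct linear-algebra argument exploiting the multiplicative structure of the monomials — then shows that $\Delta = 0$ can hold only if the $\log\gamma_i$ admit a nontrivial $\mathbb{Q}$-linear relation of bounded height, which upon unwinding contradicts $\Lambda \neq 0$. The main obstacle is not any one step in isolation but the \emph{simultaneous} optimization of every parameter so that the constant in $t$ is only $30^{t+3}t^{4.5}$: this demands Matveev's inductive descent on the number of variables together with his sharp zero estimate, and it is here that the proof genuinely improves on the classical argument. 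In the present paper we naturally invoke Theorem~\ref{th2} as an established black box rather than reproduce this machinery.
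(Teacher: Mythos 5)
The paper offers no proof of this statement at all: it is Matveev's theorem, imported verbatim from the literature (see \cite{Matveev} or \cite[Theorem~9.4]{Bugeaud1}), and your proposal likewise ends by invoking it as an established black box, so in substance you take exactly the paper's approach. Your surrounding sketch of the Gelfond--Baker machinery (auxiliary determinant, analytic upper bound versus arithmetic Liouville-type lower bound, zero estimate, parameter optimization) is a fair description of this circle of ideas, though strictly speaking the interpolation-determinant formalism you attribute to \cite{Laurent} underlies the two-logarithm bound used later in the paper (Theorem~\ref{thm6}) rather than Matveev's own argument --- but since neither you nor the paper reproduces the proof, nothing hinges on this.
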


 When $t = 2$ and $\gamma_{1}, \gamma_{2}$ are positive and multiplicatively independent, we use the following result of
 Laurent, Mignotte and Nesterenko \cite{Laurent}. 
 
 Let $B _{1}, B_{2}$ be real numbers larger than $1$ such that
 \[
  \log B_{i} \ge \max \left\{h(\gamma_{i}),\frac{| \log \gamma_{i}|}{D}, \frac{1}{D} \right\}~ \text{ for }~ i= 1,2.
\]
Let $b_1, b_2$ be integers not both zero. 
Put
\[
\Gamma := b_{1} \log \gamma_{1} + b_{2} \log \gamma_{2}
\]
and 
\[
b^{\prime} := \frac{|b_{1}|}{D\log B_{2}} + \frac{|b_{2}|}{D\log B_{1}}.
\] 
\begin{theorem}[\cite{Laurent}, Corollary 2, pp. 288] \label{thm6}
With the above notations and assumptions, we have 
\[
\log |\Gamma| > -24.34 D^{4} \left( \max \left\{ \log b^{\prime}+0.14, \frac{21}{D},\frac{1}{2} \right\} \right)^{2}  \log B_{1} \log B_{2}.
\]
\end{theorem}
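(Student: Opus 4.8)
The plan is to assume a putative solution $(m,n,k,x)$ with $x\ge 2$ and $3\le k\le\min\{m,\log x\}$ and to derive a contradiction from lower bounds for linear forms in logarithms. Set $g=g(k):=\frac{\alpha-1}{2+(k+1)(\alpha-2)}$, so that Dresden's formula reads $F_j^{(k)}=g\alpha^{j-1}+E_j$ with $|E_j|<\tfrac12$, and recall $\tfrac12<g<\tfrac34$ and $h(\alpha)=(\log\alpha)/k$. First I would locate $n$. Since $F_n^{(k)}<(F_{m+1}^{(k)})^x\le\alpha^{mx}$ by Lemma \ref{lm1}, and since Lemma \ref{lm2} gives $(F_{m-1}^{(k)}/F_{m+1}^{(k)})^x\le(3/7)^x$, whence $F_n^{(k)}\ge(1-(3/7)^x)(F_{m+1}^{(k)})^x\ge\tfrac{40}{49}\alpha^{(m-1)x}$, a second use of Lemma \ref{lm1} yields $(m-1)x<n<mx+2$. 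In particular $n=\Theta(mx)$ and $(F_{m+1}^{(k)})^x$ is the dominant term of the left-hand side.

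Next I would form a linear form that does not expand the power $(F_{m+1}^{(k)})^x$. Dividing the equation by $g\alpha^{n-1}$ and using $(F_{m+1}^{(k)})^x-g\alpha^{n-1}=(F_{m-1}^{(k)})^x+E_n$ gives
$$\Lambda_1:=(F_{m+1}^{(k)})^{x}\,g^{-1}\alpha^{-(n-1)}-1,\qquad |\Lambda_1|<3\,\alpha^{1-x},$$
so $\log|\Lambda_1|<\log 3+(1-x)\log\alpha$. Here $\Lambda_1$ is built from the three numbers $F_{m+1}^{(k)}$ (a rational integer), $g$ and $\alpha$ of the degree-$k$ field $\mathbb{K}=\mathbb{Q}(\alpha)$, with exponents $x,-1,-(n-1)$. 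After verifying $\Lambda_1\ne 0$, I would apply Matveev's theorem (Theorem \ref{th2}) with $t=3$, $D=k$, $B=n-1<mx+1$, $A_1=k\log F_{m+1}^{(k)}\le km\log\alpha$, $A_2\asymp kh(g)$ and $A_3=\log\alpha$. Comparing the two bounds for $\log|\Lambda_1|$ gives an inequality of the shape $x<C\,k^{4}m\,(\log k)^{2}\log(mx)$, and absorbing the $\log x$ on the right by Lemma \ref{lm5} turns this into an explicit polynomial bound for $x$ in terms of $k$ and $m$.

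I would then split on the size of $m$. For $3\le m<1395$ the previous bound, together with $k\le m$ and $k\le\log x$, confines $(k,m)$ to a finite list and $x$ to a large but bounded range; for each pair I would pass to the real form $\Gamma_1=x\log F_{m+1}^{(k)}-\log g-(n-1)\log\alpha$ and run the Dujella--Peth\H{o} reduction on the convergents of the relevant continued fraction to shrink the range to one settled by direct computation. For $m\ge 1395$, Lemma \ref{lm4} makes the Binet approximation of the power reliable, with error $y_{m+1}\asymp x\alpha^{-m}$, so I would assemble the sharper two-logarithm form
$$\Gamma_2:=(x-1)\log g+(mx-n+1)\log\alpha,\qquad |\Gamma_2|\ll\max\{(3/7)^x,\;x\alpha^{-m}\},$$
in the multiplicatively independent $g,\alpha$, and invoke Laurent's bound (Theorem \ref{thm6}). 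With $b'\asymp x$, $\log B_1\asymp h(g)$ and $\log B_2\asymp 1/k$, Theorem \ref{thm6} gives $\log|\Gamma_2|>-C'k^{3}(\log x)^{2}\log k$. Whichever of the two terms dominates $|\Gamma_2|$, substituting $k\le\log x$ collapses the right-hand side to a fixed power of $\log x$ and forces $x$ (or $m$, fed back into the bound of the previous paragraph) to be at most polylogarithmic in $x$; Lemma \ref{lm5} then caps $x$ by an absolute constant. A final Legendre/Dujella--Peth\H{o} reduction on $\log\alpha/\log g$ lowers this constant to the computable range, closing the argument.

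The hard part is the regime $m\ge 1395$. Matveev alone cannot finish there, since its constants are too wasteful to beat an unbounded $m$; the decisive feature is that Laurent's two-logarithm estimate, married to the unusual hypothesis $k\le\log x$, is exactly what produces a bound of the form ``$x$ is smaller than a fixed power of $\log x$'' and therefore bounds $x$ absolutely. Two supporting technicalities need care: the non-vanishing of $\Lambda_1$ and the multiplicative independence of $g,\alpha$ required for $\Gamma_2$, which I would settle by inspecting conjugates, using that $\alpha$ is a Pisot unit while $g$ is not an algebraic integer; and the uniform control of the power error through Lemma \ref{lm4}, which is precisely what dictates the split at $m=1395$.
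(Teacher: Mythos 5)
There is a fundamental mismatch here: the statement you were asked to prove is Theorem \ref{thm6} itself, i.e.\ the Laurent--Mignotte--Nesterenko lower bound
\[
\log |\Gamma| > -24.34\, D^{4} \left( \max \left\{ \log b^{\prime}+0.14, \tfrac{21}{D},\tfrac{1}{2} \right\} \right)^{2}  \log B_{1} \log B_{2}
\]
for a linear form in \emph{two} logarithms. Your proposal never engages with this statement at all. What you have written is a (broadly sensible) strategy sketch for the paper's main result, Theorem \ref{thm1}, in which Theorem \ref{thm6} is invoked as a black box --- ``invoke Laurent's bound (Theorem \ref{thm6})'' is a step of your argument, not a proof of it. Using a theorem as an ingredient in a different argument does not constitute a proof of that theorem, so as an attempt at the stated result the proposal is vacuous.

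For calibration: the paper itself offers no proof of this statement either --- it is quoted verbatim as Corollary 2, p.~288 of \cite{Laurent}, an external deep result. Its actual proof lies entirely outside the toolkit appearing in your sketch (Binet-type formulas, Matveev in three logarithms, Dujella--Peth\H{o} reduction, continued fractions): it rests on Laurent's method of interpolation determinants, in which one bounds a determinant built from monomials times exponentials evaluated at integer combinations of the two logarithms, plays an analytic upper bound against an arithmetic (Liouville-type) lower bound, and then carries out an explicit numerical optimization --- that optimization is precisely where the constant $24.34$ and the threshold term $\max\{\log b^{\prime}+0.14,\, 21/D,\, 1/2\}$ originate. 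None of that machinery is touched by your proposal, so there is no route from what you wrote to the inequality in question. (Separately, a small slip inside your sketch of Theorem \ref{thm1}: you justify multiplicative independence of $g$ and $\alpha$ by asserting $g$ is not an algebraic integer, whereas the paper's argument is a norm computation showing $|N_{\mathbb{L}/\mathbb{Q}}(g)|<1$, hence $g$ is not a \emph{unit} while $\alpha$ is; but this is moot given the main mismatch.)
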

Note that $\Gamma \neq 0$ since $\gamma_{1}$ and $\gamma_{2}$ are multiplicatively independent and $b_1$ and $b_2$ are integers not both zero.

The following criterion of Legendre, a well-known result from the theory of Diophantine
approximation, is used to reduce the upper bounds on variables which are too large.
\begin{lemma}\label{Legendre}
Let $\tau$ be an irrational number, $\frac{p_0}{q_0}, \frac{p_1}{q_1}, \frac{p_2}{q_2}, \dots$ be all the convergents of the continued fraction of $\tau$, and $M$ be a positive integer. Let $N$ be a non-negative integer such that $q_N > M$. Then putting $a(M) := \max \{a_i: i=0,1,2,\dots, N \}$, the inequality
\[
\Bigm| \tau - \frac{r}{s} \Bigm| > \frac{1}{(a(M)+2)s^2},
\]
holds for all pairs $(r, s)$ of positive integers with $0 < s < M$.
\end{lemma}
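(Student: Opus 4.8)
The plan is to deduce the inequality from Legendre's classical theorem on continued fractions, which asserts that any rational approximating $\tau$ to within $1/(2s^2)$ must be one of its convergents. First I would reduce to the case $\gcd(r,s)=1$. If $d=\gcd(r,s)>1$ and $r'/s'$ is the reduced form of $r/s$, then $0<s'\le s<M$ and $|\tau-r/s|=|\tau-r'/s'|$, so once the bound $|\tau-r'/s'|>1/((a(M)+2)(s')^2)$ is known for the coprime pair, monotonicity $s'\le s$ gives $|\tau-r/s|>1/((a(M)+2)(s')^2)\ge 1/((a(M)+2)s^2)$. Hence it suffices to treat coprime pairs, and $N$ and $a(M)$ depend only on $\tau$ and $M$, not on $(r,s)$.

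So I would assume $\gcd(r,s)=1$ and argue by contradiction, supposing $|\tau-r/s|\le 1/((a(M)+2)s^2)$. Since $q_0=1$ while $q_N>M\ge 1$ forces $N\ge 1$, the list $a_0,\dots,a_N$ contains $a_1\ge 1$, so $a(M)\ge 1$ and therefore $|\tau-r/s|<1/(2s^2)$. By Legendre's theorem this makes $r/s$ a convergent; written in lowest terms it equals $p_n/q_n$ with $s=q_n$ for some $n$. Because $s<M<q_N$ and the $q_i$ are increasing, this yields $n<N$, i.e. $n+1\le N$, so that $a_{n+1}\le a(M)$.

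The heart of the argument is the exact error formula. With $\tau_{n+1}=[a_{n+1};a_{n+2},\dots]$ the $(n+1)$-th complete quotient, one has $\tau=(\tau_{n+1}p_n+p_{n-1})/(\tau_{n+1}q_n+q_{n-1})$, whence, using $p_{n-1}q_n-p_nq_{n-1}=(-1)^n$,
\[
\left|\tau-\frac{p_n}{q_n}\right|=\frac{1}{q_n(\tau_{n+1}q_n+q_{n-1})}.
\]
Now $\tau_{n+1}=a_{n+1}+1/\tau_{n+2}<a_{n+1}+1\le a(M)+1$ and $q_{n-1}\le q_n$ (with $q_{-1}=0$ when $n=0$), so $\tau_{n+1}q_n+q_{n-1}<(a(M)+2)q_n$. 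Substituting and recalling $s=q_n$ gives
\[
\left|\tau-\frac{r}{s}\right|=\frac{1}{q_n(\tau_{n+1}q_n+q_{n-1})}>\frac{1}{(a(M)+2)q_n^{2}}=\frac{1}{(a(M)+2)s^{2}},
\]
contradicting the assumption. This establishes the bound for coprime pairs, and the reduction above extends it to all $(r,s)$ with $0<s<M$.

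The only genuinely delicate point is ensuring that the index $n$ produced by Legendre's theorem satisfies $n+1\le N$, so that $a_{n+1}$ is actually captured by $a(M)=\max\{a_0,\dots,a_N\}$; this is precisely where the hypothesis $q_N>M$ enters, via the strict chain $q_n=s<M<q_N$ and the monotonicity of the denominators. Everything else is the standard bookkeeping of continued fractions (the recurrences for $p_i,q_i$, the determinant identity, and $\tau_{n+2}>1$), and the residual edge cases, namely $n=0$ with $q_{-1}=0$ and the passage to lowest terms, are routine.
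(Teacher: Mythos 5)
Your proof is correct; note that the paper itself states this lemma without proof, citing it as a well-known criterion of Legendre, so there is no in-paper argument to compare against --- yours is the standard one (reduce to lowest terms, invoke Legendre's theorem that an approximation within $\frac{1}{2s^2}$ must be a convergent, then use the exact error formula $\bigl|\tau-\frac{p_n}{q_n}\bigr|=\frac{1}{q_n(\tau_{n+1}q_n+q_{n-1})}$ together with $\tau_{n+1}<a_{n+1}+1$ and $q_{n-1}\le q_n$), and you correctly isolate the one delicate point, namely that $q_n=s<M<q_N$ forces $n+1\le N$ so that $a_{n+1}\le a(M)$. One cosmetic remark: the $q_i$ are only nondecreasing at the start (if $a_1=1$ then $q_1=q_0=1$), but your deduction of $n<N$ from $q_n<q_N$ needs only nondecreasingness, so nothing breaks.
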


Another result which will play an important role in our proof is due to Dujella and Peth\"{o} \cite[Lemma~5 (a)]{Dujella}. 

\begin{lemma} \label{lm3}
Let $M$ be a positive integer, let $p/q$ be a convergent of the continued fraction of the irrational $\gamma$ such that $q > 6M$, and let $A,B,\mu$ be some real numbers with $A>0$ and $B>1$. Let $\epsilon:=||\mu q||-M||\gamma q||$, where $||\cdot||$ denotes the distance from the nearest integer. If $\epsilon >0$, then there exists no solution to the inequality
\[
0< |u \gamma-v+\mu| <AB^{-u},
\]
in positive integers $u$ and $v$ with
\[
u \leq M \quad\text{and}\quad u \geq \frac{\log(Aq/\epsilon)}{\log B}.
\]
\end{lemma}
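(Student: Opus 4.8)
The plan is to argue by contradiction. Suppose a pair of positive integers $(u,v)$ satisfies $0 < |u\gamma - v + \mu| < AB^{-u}$ together with $u \le M$ and $u \ge \log(Aq/\epsilon)/\log B$, and write $L := u\gamma - v + \mu$ for the linear form. The goal is to produce a lower bound on $|L|$ that is incompatible with the assumed upper bound $AB^{-u}$. The central idea is to clear denominators by multiplying $L$ by the convergent denominator $q$ and then isolate the integer part; the residual, non-integer contribution is exactly what is measured by the quantities $\|\gamma q\|$ and $\|\mu q\|$ appearing in the definition of $\epsilon$.

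Concretely, I would first invoke the best-approximation property of continued fractions: since $p/q$ is a convergent of $\gamma$, the integer $p$ is the nearest integer to $q\gamma$, so that $q\gamma = p + \theta$ with $|\theta| = \|\gamma q\|$. The largeness hypothesis $q > 6M$ is the standard condition guaranteeing that the convergent sits deep enough in the expansion that $\|\gamma q\|$ is small relative to the range $u \le M$, which is precisely what makes the positivity assumption $\epsilon = \|\mu q\| - M\|\gamma q\| > 0$ meaningful. Writing also $\mu q = s + \sigma$, where $s$ is the nearest integer to $\mu q$ and $|\sigma| = \|\mu q\| \le 1/2$, I would then compute
\[
qL = u(q\gamma) - (vq) + \mu q = (up - vq + s) + u\theta + \sigma.
\]
Setting $w := up - vq + s$, which is an integer, this reads $qL = w + u\theta + \sigma$.

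The heart of the argument is the lower bound $|qL| \ge \epsilon$, obtained by splitting on whether $w$ vanishes. If $w = 0$, then $|qL| = |\sigma + u\theta| \ge |\sigma| - u|\theta| \ge \|\mu q\| - M\|\gamma q\| = \epsilon$, using $u \le M$ and $|\theta| = \|\gamma q\|$. If $w \neq 0$, then $|w| \ge 1$ and $|qL| \ge |w| - u|\theta| - |\sigma| \ge 1 - M\|\gamma q\| - \|\mu q\|$; since $\|\mu q\| \le 1/2$ gives $1 - \|\mu q\| \ge \|\mu q\|$, we again obtain $|qL| \ge \|\mu q\| - M\|\gamma q\| = \epsilon$. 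In either case $|L| \ge \epsilon/q > 0$.

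Finally I would combine the two estimates: the assumed inequality gives $\epsilon/q \le |L| < AB^{-u}$, hence $B^{u} < Aq/\epsilon$, and taking logarithms (legitimate since $B > 1$ and $Aq/\epsilon > 0$) yields $u < \log(Aq/\epsilon)/\log B$, which contradicts the standing hypothesis $u \ge \log(Aq/\epsilon)/\log B$. I expect the only genuinely delicate points to be the justification that $p$ is the nearest integer to $q\gamma$, i.e.\ $|q\gamma - p| = \|\gamma q\|$ (this rests on the classical best-approximation theory of convergents and is where the size condition $q > 6M$ enters), and the careful sign and case bookkeeping in the lower bound for $|qL|$; everything else is elementary manipulation of the inequalities.
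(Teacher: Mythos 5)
Your proof is correct and is essentially the standard argument: the paper itself states this lemma without proof, citing Dujella and Peth\H{o} \cite{Dujella}, and your multiplication by $q$, the decomposition $qL = w + u\theta + \sigma$ with the two-case lower bound $|qL| \geq \epsilon$, and the final logarithm step reproduce exactly the original proof of that reference. The only minor quibble is your gloss on $q > 6M$: for part (a) as stated one needs only $|q\gamma - p| < 1/q \leq 1/2$ (automatic for any convergent with $q \geq 2$) to identify $|\theta| = \lVert \gamma q \rVert$, while the full strength of $q > 6M$ is really used elsewhere in Dujella--Peth\H{o}'s Lemma 5 (and to make $\epsilon > 0$ achievable, since $M\lVert \gamma q\rVert < 1/6$) --- but this does not affect the validity of your argument.
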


\section{Proof of Theorem \ref{thm1}}
By Lemma \ref{lm1}, we obtain
\begin{align*}
    \alpha^{n-1} \geq F_{n}^{(k)} = (F_{m+1}^{(k)})^x - (F_{m-1}^{(k)})^x & \geq \alpha^{(m-1)x} - \alpha^{(m-3)x} \\
    &= \alpha^{(m-2)x} (\alpha^{x} - \alpha^{-x}) > \alpha^{(m-2)x+1}
\end{align*}
and
\[
\alpha^{n-2} \leq F_{n}^{(k)} = (F_{m+1}^{(k)})^x - (F_{m-1}^{(k)})^x \leq \alpha^{mx} - \alpha^{(m-2)x} < \alpha^{mx},
\]
where we used that $\alpha^{x} -\alpha^{-x} > \alpha$ for $x \geq 2$. Thus,
\begin{equation}\label{eq01}
    (m-2)x+2 < n < mx+2.
\end{equation}
From \eqref{eq0}, we can write 
\begin{equation}\label{eq2}
    F_{n}^{(k)} = g \alpha^{n-1} + E_{n}(k),~~~~\text{where}~~|E_{n}(k)| < 1/2.
\end{equation}
Hence, \eqref{eq00} can be written as 
\begin{equation}\label{eq3}
    g\alpha^{n-1} - (F_{m+1}^{(k)})^x = - (F_{m-1}^{(k)})^x - E_{n}(k).
\end{equation}
Dividing \eqref{eq3} by $(F_{m+1}^{(k)})^x$ and taking absolute values and by Lemma \ref{lm2}, we obtain
\begin{equation}\label{eq4}
    \left|g \alpha^{n-1} (F_{m+1}^{(k)})^{-x} - 1 \right| < 2 \left(\frac{F_{m-1}^{(k)}}{F_{m+1}^{(k)}} \right)^x < \frac{2}{2.3^{x}}.
\end{equation}

In order to use Theorem \ref{th2}, we take $t:=3$, 
\[
\gamma_1 := F_{m+1}^{(k)}, \gamma_2 := \alpha, \gamma_3 := g,
\]
and 
\[
b_1 := -x, b_2 := n-1, b_3 := 1.
\]
Hence,
\[
\Lambda_{1} := g \alpha^{n-1} (F_{m+1}^{(k)})^{-x} - 1.
\]
To see $\Lambda_{1} \neq 0$, suppose $g \alpha^{n-1} (F_{m+1}^{(k)})^{-x} - 1 = 0,$ we would get a relation $(F_{m+1}^{(k)})^x = g \alpha^{n-1}$, which is contradict the identity \eqref{eq2}.

Note that $\mathbb{K} := \mathbb{Q}(\alpha)$ contains $\gamma_{1}, \gamma_{2}, \gamma_{3}$ and has $D = [\mathbb{K}:\mathbb{Q}] = k$. By the properties of the dominant root of $\Psi_{k}(x)$ and Lemma \ref{lm1} we have
$h(\gamma_{1}) = \log (F_{m+1}^{(k)}) \leq m \log \alpha < 0.7 m,$ and $h(\gamma_{2}) = (\log \alpha)/k < 0.7/k$. Bravo and Luca \cite{Bravo} showed that $h(\gamma_{3}) < 3 \log k$. Thus we can take $A_{1} := 0.7 km, A_{2} := 0.7$ and $A_{3} := 3k \log k$. From \eqref{eq01}, we can take $B:= n-1$.

From Matveev's theorem, we have a lower bound for $|\Lambda_1|$, which together with the upper bound given by \eqref{eq4} gives 
\begin{equation*}\label{eq5}
\exp\left(-1.4 \cdot 30^6 \cdot 3^{4.5} \cdot k^2 \cdot (1+\log k) \cdot (1+\log (n-1)) \cdot (0.7mk) \cdot 0.7 \cdot (3k \log k) \right) < \frac{2}{2.3^x}.
\end{equation*}
Hence, noting that $1 + \log (n-1) < 2 \log (mx+1)$, is true for all $n \geq 4$ and performing the calculations, we get 
\[
x < 1.04 \cdot 10^{12} m k^4 (\log k)^2 \log (mx+1),
\]
or equivalently 
\[
\frac{mx+1}{\log (mx+1)} < 1.1 \cdot 10^{12} m^2 k^4 (\log k)^2.
\]
Now using Lemma \ref{lm5} for $a:=mx+1$ and $T:= 1.1 \cdot 10^{12} m^2 k^4 (\log k)^2$, we have that
\begin{equation}\label{eq8}
x < 7.1 \cdot 10^{13} m^5 (\log m)^3,
\end{equation}
where we used the fact that $28 + 6 \log m + 2 \log \log m < 32 \log m$ for $m \geq 3$ and the presumption $k \leq m$. 
\subsection{The case for small $m \in [3, 1457]$}
In this case, we have
\[
x < 7.1 \cdot 10^{13} \cdot (1457)^5 \cdot \log (1457)^3 \leq  1.81 \cdot 10^{32}
\]
for $m \in [3, 1457]$. Thus, we obtain $n \leq mx+1 < 2.63\cdot 10^{35}$ and $k \leq \log x \leq 74$. Also note that $n \leq mx+1$, gives us $x \geq (n-1)/1457$.

We set
\[
\Gamma_{1} := (n-1) \log \alpha - \log \left( \frac{1}{g} \right) - x \log F_{m+1}^{(k)}.
\]
Thus, $\Lambda_1 = e^{\Gamma_1}-1$. Recall that, from \eqref{eq3}, we have $\Lambda_1 < 0$, which implies $\Gamma_1 < 0$. Now, since $|\Lambda_1| < 2/(2.3)^x \leq 0.379$ for $x \geq 2$, it follows that $e^{|\Gamma_1|} < 1.5$. Hence, we get
\[
0 < |\Gamma_1| < e^{|\Gamma_1|}-1 \leq e^{|\Gamma_1|}|e^{\Gamma_1}-1| <  \frac{3}{2.3^x}.
\]
Dividing the last inequality by $\log F_{m+1}^{(k)}$, and using that $x > (n-1)/1457$, we get 

\begin{equation} \label{dujpetho1}
0 < n\left(\frac{\log \alpha}{\log F_{m+1}^{(k)}}\right) - x - \left(\frac{\log (\alpha/g)}{\log F_{m+1}^{(k)}}\right) < 3.01 \cdot (2.3)^{\frac{-n}{1457}} ~.
\end{equation}

In order to use the reduction method, take
\[
\gamma_{m,k} := \frac{\log \alpha}{\log F_{m+1}^{(k)}},~~\mu_{m,k} := - \frac{\log (\alpha/g)}{\log F_{m+1}^{(k)}},~~A:= 3.01,~~B:= (2.3)^{\frac{1}{1457}}.
\]
The fact that $\alpha$ is a unit in $\mathcal{O}_{\mathbb{K}}$, the ring of integers of $\mathbb{K}$, ensures that $\gamma_{m,k}$ is an
irrational number. Let $q_{(t,m,k)}$ be the denominator of the $t$th convergent of the continued fraction of $\gamma_{m,k}$.

Taking $M := 2.64 \cdot 10^{35}$, we use {\it Mathematica} to get 
\[
\min_{\substack {3 \leq k \leq 74 \\ 3 \leq m \leq 1457}} q_{(700,m,k)} > 10^{319} > 6M \mbox{ \ and \ } \max_{\substack {3 \leq k \leq 74 \\ 3 \leq m \leq 1457}} q_{(700,m,k)} < 2.1 \cdot 10^{425}. \
\]
The maximal value of $M\lVert \gamma_{m,k} \cdot q_{(700,m,k)} \rVert < 10^{-284}$, whereas the minimal value of $\lVert \mu_{m,k} \cdot q_{(700,m,k)} \rVert> 5.29 \cdot 10^{-214}$. 
Also, for 
\[
\epsilon_{700,m,k}:= \lVert\mu_{m,k} \cdot q_{(700,m,k)}\rVert - 2.64 \cdot 10^{35} \lVert\gamma_{m,k} \cdot q_{(700,m,k)} \rVert,
\]
we obtain that 
\[
 \epsilon_{700,m,k} > 5.29 \cdot 10^{-214} , 
\]
which means that $\epsilon_{700,m,k}$ is always positive (this is not true for the denominator of $600$th convergent). Hence by Lemma \ref{lm3}, there are no integer solutions for \eqref{dujpetho1} when 
\[
\left \lfloor \frac{\log(3.01 \cdot 2.1 \cdot 10^{425}/5.29 \cdot 10^{-214})}{\log((2.3)^{\frac{1}{1457}})}\right \rfloor \leq n \leq 2.63 \cdot 10^{35}.
\]
It follows that $ 850196\leq n \leq 2.64 \cdot 10^{35}$  and therefore we have $n \leq 850195$. Consequently $x \leq 850192$, since $ x \leq (n-3)/(m-2)$. Also, using that $k \leq \log x$, we get $k \leq 13$. 

A computer search with {\it Mathematica} revealed that there are no solutions to \eqref{eq00} in the following range:
\[ 
3 \leq m\leq 1457,  3\leq k \leq 13 , 21\leq x \leq 850192~ \text{ and }~ (m-2)x+2 \leq n \leq mx+1.
\]
This completes the analysis of the case when $m$ is small.
\subsection{The case of  large $m$}
Set $y_{m} := \frac{|E_{m}(k)|x}{g \alpha^{m-1}}$. From \eqref{eq8} and $m \geq 1458$, we have
\begin{equation}\label{eq11}
    y_m < \frac{|E_{m}(k)| 7.1 \cdot 10^{13} m^5 (\log m)^3}{g \alpha^{m-1}} < \frac{1}{\alpha^{(m-1)/2}},
\end{equation}
where $7.1 \cdot 10^{13} m^5 (\log m)^3 < (7/4)^{\frac{m-1}{2}} < \alpha^{\frac{m-1}{2}}$ holds for $m \geq 1458$. In particular, $y_m < \alpha^{-728} < 10^{-30}$. Similarly,
\[
    y_{m-1} = \frac{|E_{m-1}(k)|x}{g \alpha^{m-2}} < \frac{1}{\alpha^{(m-1)/2}}~~\text{and}~~
    y_{m+1} = \frac{|E_{m+1}(k)|x}{g \alpha^{m}} < \frac{1}{\alpha^{(m-1)/2}}.
\]
Lemma \ref{lm4} is true if we replace $m$ by $m-1$ and $m+1$. Thus
\begin{equation}\label{eq12}
|(F_{m-1}^{(k)})^x - g^x \alpha^{(m-2)x}| < 2 y_{m-1} g^x \alpha^{(m-2)x}~~\text{and}~~|(F_{m+1}^{(k)})^x - g^x \alpha^{mx}| < 2 y_{m+1} g^x \alpha^{mx}.
\end{equation}
Now, we need to make a few algebraic manipulations in order to apply Theorem \ref{thm6}. Rewrite \eqref{eq00} as follows,
\[
g \alpha^{n-1} + E_{n}(k) = \left((F_{m+1}^{(k)})^x - g^x \alpha^{mx} \right) - \left((F_{m-1}^{(k)})^x - g^x \alpha^{(m-2)x} \right) + g^x \alpha^{mx} - g^x \alpha^{(m-2)x},
\]
which gives
\begin{align*}
|g \alpha^{n-1} - g^x \alpha^{mx} (1 - \alpha^{-2x})| &\leq |(F_{m+1}^{(k)})^x - g^x \alpha^{mx}| + |(F_{m-1}^{(k)})^x - g^x \alpha^{(m-2)x}| + |E_{n}(k)| \\
& < 2y_{m+1} g^x \alpha^{mx} + 2 y_{m-1} g^x \alpha^{(m-2)x} + \frac{1}{2}.
\end{align*}
Dividing $g^x \alpha^{mx}$ on both sides, we have
\begin{equation}\label{eq13}
    |g^{1-x} \alpha^{n-(mx+1)} - (1-\alpha^{-2x})| < 2y_{m+1} + 0.1 y_{m-1} + \frac{1}{2g^x\alpha^{mx}},
\end{equation}
where we used the fact $2 \alpha^{-2x} < 2 \left( \frac{7}{4} \right)^{-6} < 0.1$. Since
\[
    2g^x \alpha^{mx-\frac{m-2}{2}} > 2 \left(\frac{1}{2} \right)^x \left(\frac{7}{4} \right)^{mx - \frac{m-2}{2}} > 10^{531} > 10^3,
\]
we have $\left(2 g^x \alpha^{mx} \right)^{-1} < 0.001/ \alpha^{(m-2)/2}$. Using the last inequalities with \eqref{eq13}, we obtain
\begin{align}\label{upper}
|g^{1-x} \alpha^{n-(mx+1)} - (1-\alpha^{-x})| 
&< \frac{2}{\alpha^{\frac{m}{2}}} + \frac{0.1}{\alpha^{\frac{m-2}{2}}} + \frac{0.001}{\alpha^{\frac{m-2}{2}}} \nonumber \\
&< \frac{2.11}{\alpha^{\frac{m-2}{2}}}.
\end{align}
Hence, we conclude that
\begin{align}\label{eq14}
    |g^{1-x} \alpha^{n-(mx+1)} - 1| &< \frac{2.11}{\alpha^{\frac{m-2}{2}}} + \frac{1}{\alpha^{2x}} \nonumber \\ 
    &< \frac{3.11}{\alpha^l},
\end{align}
where $l:= \min \{ x, \frac{m-2}{2}\}$.

We apply Theorem \ref{thm6} with $t:=2$, $\lambda_{1}:=g$, $\lambda_{2}:= \alpha$ and $c_{1}:= 1-x$, $c_{2}:= n-(mx+1)$. The fact that $\lambda_{1} =g$ and $\lambda_{2} = \alpha$ are multiplicatively independent follows because $\alpha $ is a unit and $g$ is not. To see that this is not so, we perform a norm calculation of the element $g$ in $\mathbb{L} := \mathbb{Q}(\alpha)$. The norm of $g$ has been determined for all $k\ge 2$ in \cite{Fuchs} and the formula is
\begin{equation*}\label{eq15}
	|N_{\mathbb{L}/\mathbb{Q}} (g_{k}(\alpha))|=\frac{(k-1)^{2}}{2^{k+1}k^{k}-(k+1)^{k+1}}.
\end{equation*}
One can check that $|N_{\mathbb{L}/\mathbb{Q}} (g_{k}(\alpha))|< 1 $ for all $k \ge 2$ and therefore $g$ is not a unit for any $k$. So we can take $ \mathbb{K} := \mathbb{Q}(\alpha)$ which has degree $D = k$, $h(g)< 3 \log k, h(\alpha)=(\log \alpha)/k$. Moreover, we can take 
\[
\log B_{1} = 4\log k  > \max{\left\{h(g),\dfrac{|\log (g)|}{k},\dfrac{1}{k}\right\}}  
~~{\rm and}~~ 
\log B_{2} =  \max{\left\{h(\alpha),\dfrac{|\log (\alpha)|}{k},\dfrac{1}{k}\right\}} = \dfrac{1}{k}.
\]
Observe that $n\geq (m-2)x+3$, which implies $n-(mx+1)\geq -2(x-1)$ and $n \le mx+1$,  yields  $ n-(mx+1)\le 2(x-1)$. Hence $|n-(mx+1)|<2(x-1) < 2x$.
Thus   
\[
{b}^{\prime } = \frac{|(1-x)|}{k(\frac{1}{k})} + \frac{|n-(mx+1)|}{4k \log k}<x + \frac{2x}{4k \log k }<1.2x.
\]
By Theorem \ref{thm6}, we get 
\begin{equation*}\label{eq16}
|\Lambda_{2}|> \exp \left(    -24.34 \cdot k^{4} 
\left( \max {\left\{\log(1.2x)+0.14 , \frac{21}{k},\frac{1}{2}\right\}} \right)^{2} 4 \log k \left(\frac{1}{k}\right) \right).
\end{equation*}
Thus 
\begin{equation*}\label{eq17}
|\Lambda_{2}|> \exp \left(-97.4 \cdot k^{3}\log k  
\left( \max{\left\{\log(1.4x) , \frac{21}{k},\frac{1}{2}\right\}} \right)^{2}  \right),
\end{equation*}
where we used the fact that $\log \left(1.2 x\right)+0.14 =\log \left(1.2\exp(0.14) x \right)< \log (1.4x) $.
The last inequality together with \eqref{eq14} we get   
\begin{equation*}\label{eq18}
\frac{3.11}{\alpha^{l}} > \exp \left(-97.4 \cdot k^{3}\log k  
\left( \max{\left\{\log(1.4x) , \frac{21}{k},\frac{1}{2}\right\}} \right)^{2}  \right)
\end{equation*}
yielding
\begin{equation}\label{eq19}
\ l \log \alpha - \log 3.11 < 97.4 \cdot k^{3} \log k  
\left( \max {\left\{\log(1.4x), \frac{21}{k},\frac{1}{2}\right\}} \right)^{2}.
\end{equation}
Since $\log(1.4x) > 1.02 $, the maximum in the right-hand side cannot be $\frac{1}{2}$. Let the maximum in the right-hand side of \eqref{eq19}  be $21/k$, then we get 
\begin{equation*}\label{eq20}
	l < 174.1 \cdot k^{3}\log k \left(\frac{21}{k}\right)^2
\end{equation*}
yielding
\begin{equation*}\label{eq21}
	l < 7.7 \cdot 10^{4} k \log k.
\end{equation*}
If $l= x$, then 
\begin{equation}\label{eq22}
	x < 7.7 \cdot 10^{4} k \log k.
\end{equation}
If $l = \frac{m-2}{2}$, then 
\begin{equation*}\label{eq23}
	\frac{m-2}{2} < 7.7 \cdot 10^{4} k \log k
\end{equation*}
thus,
\begin{equation}\label{eq24}
	m<1.55 \cdot 10^{5} k \log k,
\end{equation}
where we used that $m-2 > \frac{m}{1.002}$ for $m \ge 1458$.

Now, assume that the maximum in the right-hand side of \eqref{eq19}  is $\log(1.4x)$
then,
\begin{equation*}\label{eq25}
	l<174.1 \cdot k^{3}\log k \left(\log (1.4x)\right)^2\\
\end{equation*}
yielding
\begin{equation*}\label{eq26}
	l<3.5 \cdot 10^{2} k^{3}\log k \left(\log x\right)^2.
\end{equation*}  
For the above inequality, we used that $\log (1.4x)<1.4\log x$ holds for $x\ge 2$. If $l=x$, then 
\begin{equation*}\label{eq28}
	x<3.5 \cdot 10^{2} k^{3}\log k (\log x )^{2}
\end{equation*}
thus,
\begin{equation*}\label{eq29}
\frac{x}{(\log x)^{2}}<3.5 \cdot 10^{2} k^{3}\log k.
\end{equation*}
Now we apply Lemma \ref{lm5} with $r:=2$ and $T := 3.5 \cdot 10^{2} k^{3}\log k$  to get 
\begin{equation*}\label{eq30}
	x<1.4 \cdot 10^{3} k^{3}\log k (3\log k +\log \log k +\log (3.5 \cdot 10^{2}))^2 
\end{equation*}
thus,
\begin{equation}\label{eq31}
x < 1.01 \cdot 10^{5} k^{3} (\log k)^3,
\end{equation}
where we used that 
$	
\left(3\log k + \log \log k + \log (3.5 \cdot 10^{2}) \right)^{2} < 72 (\log k)^{2} .
$

If $l= \frac{m-2}{2}$, then we use \eqref{eq8} to get 
\begin{align*} \label{eq32}
\frac{m-2}{2} <& 3.5 \cdot 10^{2} k^{3} \log k (\log x)^{2} \\ 
<& 3.5 \cdot 10^{2} k^{3} \log k (\log (7.1 \cdot 10^{13} m^{5} (\log m)^{3} ))^{2} \\ 
<& 3.5 \cdot 10^{2} k^{3}\log k (\log(7.1 \cdot 10^{13})+5\log m +3\log \log m)^{2} 
\end{align*}
thus, 
\begin{equation*}\label{eq33}
	\frac{m}{(\log m)^{2}}< 7.72 \cdot 10^{4} k^{3} \log k 
\end{equation*}
where we used that $ \left(\log (7.1 \cdot 10^{13})+ 5 \log m + 3 \log \log m \right) ^{2} < 1.1 \cdot 10^{2}(\log m)^{2}$  and     $m-2>m/1.002$  for  $m \ge 1458$ .
  
Using Lemma \ref{lm5} again we get an upper bound for $m$ in terms of $k$  
\begin{equation}\label{eq34}
	m< 5.6 \cdot 10^{7} k^{3} (\log k)^{3},
\end{equation}
where we used that $ \left(\log (7.72 \cdot 10^{4})+ 3 \log k +  \log \log k \right) ^{2} < 1.8 \cdot 10^{2}(\log k)^{2}$  and $m-2>m/1.002$  for  $m \ge 1458$ .

Comparing inequalities \eqref{eq22} with \eqref{eq31} and \eqref{eq24} with \eqref{eq34}, respectively, we
conclude that \eqref{eq31} and \eqref{eq34} always hold.\\
Now by \eqref{eq34} combined with \eqref{eq8}, we get 
\begin{equation}\label{eq35}
	x<1.85 \cdot 10^{56} k^{15}(\log k )^{18}         
\end{equation}
where we used that

$ \left(\log (5.6 \cdot 10^{7})+ 3 \log k + 3 \log \log k \right) ^{3} < 4.7 \cdot 10^{3}(\log k)^{3}$  and     $m-2>m/1.002$  for  $m \ge 1458$ .
Finally, comparing \eqref{eq31} with \eqref{eq35}, we conclude that \eqref{eq35} always hold and gives us an upper bound for $x$ in terms of $k$. Since $ k\le \log x $, we have 
\begin{equation*}\label{bound}
	x<1.85 \cdot 10^{56} (\log x)^{15}(\log \log x)^{18} 
\end{equation*}
which is true only for $x<2.27 \cdot 10^{105},~ \text{so  }k<\log (2.27 \cdot 10^{105})< 242.$ 

The previous bounds are too large, so we need to reduce them by using a criterion due to Legendre. First, we go to \eqref{eq14} and using that $ x \ge 20$ and $m\ge 1458 $, we get the following upper bound
\begin{equation}\label{eq36}
|\Lambda_{2}|<\frac{1}{\alpha^{2x}}+\frac{2.11}{\alpha^{\frac{m-2}{2}}}<\frac{1}{\alpha^{40}}+\frac{2.11}{\alpha^{728}}<1.9 \cdot 10^{-10}.
\end{equation}
Let
\[
\Gamma_{2}:= (x-1)\log (g^{-1})-(mx+1-n)\log\alpha.
\]
Equation \eqref{eq36} can be rewritten as $|e^{\Gamma_{2}-1}|<1.9 \cdot 10^{-10}$.  Since $|e^{\Gamma_{2}-1}|<1.9 \cdot 10^{-10}$, we get that $|e^{\Gamma_{2}}|<1.9 \cdot 10^{-10}+1$. Thus  
\begin{align*}
    |{\Gamma_{2}}|\le e^{|\Gamma_{2}|}|e^{\Gamma_{2}}-1|&<(1.9 \cdot 10^{-10}+1)| \Lambda_{2}| \\
    &< (1.9  \cdot 10^{-10}+1) \left( \frac{1}{\alpha^{2x}}+\frac{2.11}{\alpha^{(m-2)/2}} \right).
\end{align*}
By replacing $\Gamma_{2}$ in the above inequality by it's formula and dividing through by $(x-1)\log \alpha $, we obtain
\begin{equation}\label{eq37}
\left|\frac{\log (g^{-1})}{\log \alpha}-\frac{mx+1-n}{x-1}\right|<\frac{(1.9 \cdot 10^{-10}+1)}{(x-1) \log \alpha} \left( \frac{1}{\alpha^{2x}}+\frac{2.11}{\alpha^{(m-2)/2}}\right).
\end{equation}
Since $ m \ge 1458 $, we have $\alpha^{\frac{m-2}{2}}>(\frac{7}{4})^{728} > 2.32 \cdot 10^{71}x $.
Assume that $x>150$. Then $\alpha^{2x}>2.32 \cdot 10^{71}x$. Now \eqref{eq37} becomes
\begin{equation}\label{eq38}
\left| \frac{\log (g^{-1}) }{\log \alpha}-\frac{mx+1-n}{x-1}\right|<\frac{1}{4.14 \cdot 10^{70} (x-1)^{2}}.
\end{equation}
By the Lemma \ref{Legendre}, we infer that  $(mx+1-n)/(x-1)$ is a convergent of the continued fraction of $\beta_{k}=(\log (g^{-1}))/(\log \alpha) = [a_{0}^{(k)},a_{1}^{(k)},\dots ]$ and $p_{t}^{(k)}/q_{t}^{(k)} $ its $t$th convergent. Thus, $(mx+1-n)/(x-1)= p_{t_{k}}^{(k)}/q_{t_{k}}^{(k)}$ for some $t_{k}$. Therefore $q_{t_{k}}^{(k)}|x-1$, and so $x-1 \ge q_{t_{k}}^{(k)}$. On the other hand, with the help of  {\it Mathematica}, we get that
\[
 \min q_{230}^{(k)}>3.88 \cdot 10^{109}>2.27 \cdot 10^{105}>x-1,
\]
 therefore $1\le t_{k} \le 230$, for all $3\le k \le 242$. Using {\it Mathematica}, we observe that $a_{t_{k}}+1 \le \max \left\{ a_{t}^{(k)}\right\} <4.09 \cdot 10^{70} $, for $k \in \left\{3, \dots ,242\right\}$ and $t \in \left\{1, \dots ,231\right\}$. From the properties of continued fractions, we have 
\[
\left| \beta_{k}-\frac{mx+1-n}{x-1}\right|=\left|  \beta_{k}- \frac {p_{t_k}^{(k)}}{q_{t_k}^{(k)}}\right|>\frac{1}{4.09 \cdot 10^{70} (x-1)^{2}}
\]
which contradicts \eqref{eq38}. So, $x \le 150$ and $
k \in \left\{3,4,5\right\}$.

Now divide $(1+ \alpha^{-2x})$ in \eqref{upper}, we get
\begin{equation}\label{eq39}
\left| \alpha^{n-(mx+1) g^{1-x}(1+\alpha^{-2x})^{-1}} - 1\right|< \frac{2.11}{\alpha^{(m-2/2)}}.
\end{equation}
Next, we put $t:=mx+1-n$. Using \eqref{eq36}, we obtain
\[
g^{1-x} \alpha^{-t} -1 < 1.9 \cdot 10^{-10},
\]
yielding
\begin{align*}
t &> \frac{(x-1)\log g^{-1}}{\log \alpha} - \frac{\log (1+1.9 \cdot 10^{-10})}{\log \alpha}\\ 
&> 0.68x-0.69	
\end{align*}	  
and 
\[
g^{1-x} \alpha^{-t} -1 > -1.9 \cdot 10^{-10},
\]
yielding
\begin{align*}
t &< \frac{(x-1)\log g^{-1}}{\log \alpha} - \frac{\log (1-1.9 \cdot 10^{-10})}{\log \alpha}\\ 
&< 1.27x-1.26.
\end{align*}
Therefore, $t\in \left( \lfloor0.68x -0.69\rfloor ,\lfloor1.27x-1.26\rfloor \right)$. After performing a calculation for the range $ 20\le x \le 150, 3 \le k \le 5 $ and $0.68x-0.69 < t < 1.27x-1.26$, we get
\[
\min \left\{ \left| \frac{\alpha^{-t} g^{1-x}}{(1+ \alpha^{-x})}                      \right| -1 \right\} > 0.0003,
\]
which implies
\[
\frac{2.11}{\alpha^{(m-2)/2}}> 0.0003 \Rightarrow m \le 33 
\]
and that contradicts the fact $m \ge 1458$. Hence, the result is proved.


\vspace{10mm} \noindent \footnotesize
\begin{minipage}[b]{90cm}
\large{USTHB, Faculty of Mathematics, \\ 
LATN Laboratory, BP 32, El Alia, 16111, \\ 
Bab Ezzouar, Algiers, Algeria.\\
Email: hbensella@usthb.dz}
\end{minipage}

\vspace{05mm} \noindent \footnotesize
\begin{minipage}[b]{90cm}
\large{Department of Mathematics, \\ 
KIIT University, Bhubaneswar, \\ 
Bhubaneswar 751024, Odisha, India. \\
Email: bijan.patelfma@kiit.ac.in, iiit.bijan@gmail.com}
\end{minipage}

\vspace{05mm} \noindent \footnotesize
\begin{minipage}[b]{90cm}
\large{USTHB, Faculty of computer sciences,\\
BP 32, El Alia, 16111 Bab Ezzouar, Algiers, Algeria.\\
Email: dbehloul@yahoo.fr}
\end{minipage}
\end{document}